\title{Quasi-hereditary algebras and generalized Koszul duality}
\author{Dag Oskar Madsen}
\address{Faculty of Professional Studies, University of Nordland, NO-8049 Bod{\o}, Norway}
\email{Dag.Oskar.Madsen@uin.no}
\keywords{Quasi-hereditary algebras, generalized Koszul duality, Yoneda extension algebras}
\subjclass[2010]{16W50, 16S37, 18E30, 16G99}
\newtheorem{lem}{Lemma}[section]
\newtheorem{prop}[lem]{Proposition}
\newtheorem{cor}[lem]{Corollary}
\newtheorem{thm}[lem]{Theorem}
\newtheorem*{cor2}{Corollary}
\theoremstyle{definition}
\newtheorem{example}[lem]{Example}
\newtheorem*{defin2}{Definition}
\newcommand{\G}{\Gamma}
\renewcommand{\L}{\Lambda}
\newcommand{\Z}{\mathbb Z}
\newcommand{\End}{\operatorname{End}}
\newcommand{\Ext}{\operatorname{Ext}}
\newcommand{\gldim}{\operatorname{gldim}}
\newcommand{\Gr}{\operatorname{\mathsf{Gr}}}
\newcommand{\gr}{\operatorname{\mathsf{gr}}}
\newcommand{\Hom}{\operatorname{Hom}}
\renewcommand{\mod}{\operatorname{\mathsf{mod}}}
\newcommand{\op}{\operatorname{op}}
\renewcommand{\top}{\operatorname{top}}
\newcommand{\gsh}[1]{\langle #1 \rangle}
\begin{document}

\begin{abstract}
We present an easily applicable sufficient condition for standard Koszul algebras to be Koszul with respect to $\Delta$. If a quasi-hereditary algebra $\L$ is Koszul with respect to $\Delta$, then $\L$ and the Yoneda extension algebra of $\Delta$ are Koszul dual in a sense explained below, implying in particular that their bounded derived categories of finitely generated graded modules are equivalent. We also prove that the extension algebra of $\Delta$ is Koszul in the classical sense.
\end{abstract}

\maketitle

\section*{Introduction}

Finite dimensional algebras appearing in the representation theory of algebraic groups and Lie algebras usually have nice homological properties. For instance, such algebras are often Koszul and quasi-hereditary \cite{Icra}. The work presented here is inspired by the paper \cite{Stan} and what the authors of that paper refer to as ``Ovsienko's idea'': For some quasi-hereditary Koszul algebras $\L$ it might be possible to construct an equivalence of bounded derived categories of finitely generated graded modules $\mathcal D^b(\gr \L) \to \mathcal D^b(\gr \G)$, where $\G$ is the extension algebra of standard modules $\G=[\Ext_{\L}^{\ast} (\Delta,\Delta)]^{\op}$. The equivalence should send standard modules to projective modules and costandard modules to simple modules. The extension algebra of standard modules is always directed \cite[Theorem 1.8(b)]{Icra}, which often means its representation theory is easier to understand than that of the original algebra $\L$.

In \cite{Stan} Ovsienko's idea was realized for a large class of quasi-hereditary Koszul algebras, but at the price of having to organize extensions of $\Delta$ as a Koszul category, not as an algebra. In the present paper we use a different approach and pay a different price. When Ovsienko's idea works for an algebra, we interpret this as an instance of generalized Koszul duality \cite{TKos} \cite{Adv}. The drawback is that we have to regrade our algebra $\L$ to make everything fit. This regrading might at first seem a bit artificial, but generalized Koszul duality tells us that $\L$ is isomorphic to an extension algebra over $\Gamma$ and the new grading agrees with the $\Ext$-grading. The advantages of our approach are this symmetry between $\L$ and $\G$ and the connection with the larger framework of generalized Koszul duality.

For which quasi-hereditary algebras $\L$ do we in this way obtain an equivalence $\mathcal D^b(\gr \L) \to \mathcal D^b(\gr \G)$? First of all $\L$ should not only be Koszul, but standard Koszul \cite{QExt}, meaning that the standard modules have linear projective resolutions. If in addition the radical layers of the standard modules are well behaved (a certain condition (H) is satisfied), then we get our main result.

\begin{cor2}[to Theorem \ref{hoved}]
Let $\L$ be a standard Koszul algebra admitting a height function $h$ satisfying condition {\rm{(H)}}. Let $$\G=[\Ext_{\L}^{\ast} (\Delta,\Delta)]^{\op}.$$ Then $$\L \cong [\Ext_{\G}^{\ast} (D\Delta,D\Delta)]^{\op}$$ as ungraded algebras. Furthermore, when $\L$ and $\G$ are given the $\Ext$-grading, then there is an equivalence of triangulated categories $\mathcal D^b(\gr \L) \to \mathcal D^b(\gr \G)$.
\end{cor2}

We also prove in Theorem \ref{ord} that $\G=[\Ext_{\L}^{\ast} (\Delta,\Delta)]^{\op}$ is Koszul in the classical sense.

In section 1 we recall the basic notions and facts about quasi-hereditary algebras. In section 2 we look at the connection between standard Koszul algebras and algebras Koszul with respect to $\Delta$. In section 3 we introduce condition (H) for height functions and discuss its consequences. In section 4 we present our main result and apply it to a wide range of examples.

We wish to thank the referee for valuable remarks and for pointing us to relevant additional references.

\section{Quasi-hereditary algebras with duality}

Let $\Bbbk$ be a field and let $\L$ be a finite dimensional $\Bbbk$-algebra. Fix an ordering on a complete set of non-isomorphic simple $\L$-modules $S_1, \ldots, S_r$. For each $0 \leq i \leq r$, define the \emph{standard module} $\Delta_i$ to be the largest quotient of the projective module $P_i$ having no simple composition factors $S_j$ with $j>i$. Dually, define the \emph{costandard module} $\nabla_i$ to be the largest submodule of the injective module $I_i$ having no simple composition factors $S_j$ with $j>i$. Let $$\Delta=\bigoplus_{i=1}^r \Delta_i$$ and $\nabla=\bigoplus_{i=1}^r \nabla_i$.

We say that $\L$ is a \emph{quasi-hereditary algebra} \cite{Hwc} \cite{QuaH} if (i) $\Lambda$ admits a $\Delta$-filtration, i.e., there is a filtration $0=M_0 \subseteq M_1 \subseteq \ldots \subseteq M_t=\L$ where the subfactors $M_j/M_{j-1}$ are standard modules for all $1 \leq j \leq t$, and (ii) $\End_\L (\Delta_i)$ is a division ring for all $1 \leq i \leq r$. All quasi-hereditary algebras have finite global dimension.

If $\L$ is quasi-hereditary, then for all $1 \leq i,j \leq r$ and $n>0$ we have $\Ext^n_\L(\Delta_i,\nabla_j)=0$ \cite[Theorem 1.8]{Icra}. Furthermore $\Hom_\L(\Delta_i,\nabla_j)=0$ whenever $i \neq j$, and $\Hom_\L(\Delta_i,\nabla_i) \simeq \End(S_i)$.

A quasi-hereditary structure on an algebra determines a unique \emph{characteristic tilting module} $\mathbb T$, a basic tilting-cotilting module that admits both a $\Delta$-filtration and a $\nabla$-filtration \cite{Tilt}. For each $1 \leq i \leq r$, the module $\mathbb T$ has an indecomposable direct summand $T_i$ which admits morphisms $\Delta_i \hookrightarrow T_i \twoheadrightarrow \nabla_i$ with non-zero composition. These are the only indecomposable direct summands of $\mathbb T$, so $\mathbb T \simeq \bigoplus_{i=1}^r T_i$.

In this paper we only consider \emph{quasi-hereditary algebras with duality}, that is, we suppose our quasi-hereditary algebras are equipped with a contravariant exact equivalence $(-)^\circ \colon \mod \L \to \mod \L$ such that ${S_i}^\circ \simeq S_i$ for all $1 \leq i \leq r$. The direct summands of the characteristic tilting module have the property that ${T_i}^\circ \simeq T_i$ for all $1 \leq i \leq r$. Quasi-hereditary algebras with duality are called \emph{BGG algebras} in \cite{Bgg}. For such algebras the BGG reciprocity principle holds \cite{Bgg}; we have $$(P_i \colon \Delta_j)=[\Delta_j \colon S_i]$$ for all $1 \leq i ,j \leq r$, where $(P_i \colon \Delta_j)$ denotes the filtration multiplicity of $\Delta_j$ in a $\Delta$-filtration of $P_i$ and $[\Delta_j \colon S_i]$ denotes the multiplicity of $S_i$ in a composition series for $\Delta_j$.

\section{Standard Koszul algebras and $T$-Koszul algebras}

We are interested in algebras that are simultaneously quasi-hereditary and  Koszul. For this we need a graded setting, so let $\L=\bigoplus_{i=0}^t \L_i$ be a graded finite dimensional algebra with $\L_0 \cong \Bbbk^{\times r}$. In this case the Jacobson radical of the algebra is given by $J=\bigoplus_{i=1}^t \L_i$. We keep the notation from \cite{Adv}, in particular the category of graded $\L$-modules is denoted by $\Gr \L$, and the category of finitely generated graded $\L$-modules is denoted by $\gr \L$. Given a graded $\L$-module $M$, the \emph{$j$th graded shift} of M, denoted $M \gsh j$, is the module with graded parts $(M \gsh j)_i= M_{i-j}$ and module structure inherited from $M$. A graded algebra $\L=\bigoplus_{i=0}^t \L_i$ is called a \emph{(classical) Koszul algebra} \cite{Pri} \cite{Bei} if $\Ext^i_{\Gr \L}(\L_0,\L_0 \gsh j)=0$ whenever $i \neq j$.

Suppose $\L$ is quasi-hereditary. For all $1 \leq j \leq r$, the modules $\Delta_j$, $\nabla_j$ and $T_j$ have graded lifts which preserve the morphisms $\Delta_j \hookrightarrow T_j \twoheadrightarrow \nabla_j$, see for example \cite{Stan}. The functor $(-)^{\circ}$ lifts to a duality $(-)^{\circ} \colon \gr (\L) \to \gr (\L)$ with $S_j \gsh l^\circ \simeq S_j \gsh {-l}$ for all $l \in \Z$. A quasi-hereditary algebra with duality is called \emph{standard Koszul} \cite{QExt} if standard modules have linear projective resolutions, in other words $\Ext^i_{\Gr \L}(\Delta,\L_0 \gsh j)=0$ whenever $i \neq j$. Standard Koszul algebras are Koszul in the ordinary sense \cite[Theorem 1,4]{QExt}.

In the present paper we explore a connection between standard Koszul algebras and the $T$-Koszul algebras which were first defined in \cite{TKos}. In \cite{Adv} we proposed the following simplified definition. The context is graded algebras as above, except that $\L_0$ is no longer required to be semi-simple. We assume $\dim_k\L_i < \infty$ for all $i \geq 0$.

\begin{defin2}
Let $\L=\bigoplus_{i\geq 0} \L_i$ be a graded algebra with $\gldim \L_0 < \infty$, and let $T$ be a graded $\L$-module concentrated in degree zero.
We say that $\L$ is \emph{Koszul with respect to $T$} or \emph{$T$-Koszul} if both of the following conditions hold.
\begin{itemize}
\item[(i)] $T$ is a tilting $\L_0$-module.
\item[(ii)] $T$ is graded self-orthogonal as a $\L$-module, that is, we have $$\Ext^i_{\Gr \L}(T,T \gsh j)=0 \text{ whenever }i \neq j.$$
\end{itemize}
\end{defin2}

The following theorem is a statement of Koszul duality for $T$-Koszul algebras. Here $D$ denotes the functor $D=\Hom_\Bbbk(-,\Bbbk)$.

\begin{thm}\cite[Theorem 4.2.1]{Adv}\label{kosdua}
Let $\L=\bigoplus_{i\geq 0} \L_i$ be a graded algebra with $\gldim \L_0 < \infty$. Suppose $\L$ is a Koszul algebra with respect to a module $T$. Let $\G=[\Ext_\L^{\ast} (T,T)]^{\op}$. Then
\begin{itemize}
\item[(a)]$\gldim \G_0 < \infty$, and $\G$ is a Koszul algebra with respect to $_{\G}DT$.
\item[(b)] There is an isomorphism of graded algebras $\L \cong [\Ext_\G^{\ast} (DT,DT)]^{\op}.$
\end{itemize}
\end{thm}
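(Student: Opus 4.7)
The plan is to prove part (a) by constructing an explicit ``linear resolution'' of $T$ over $\L$ whose terms live in the appropriate graded shifts, and then to derive part (b) from a biduality argument that mirrors the construction.

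First I would analyze $\G_0$. Since $T$ is concentrated in degree zero, any homomorphism $T \to T$ in $\gr\L$ factors through $\L_0$, so $\G_0 = \End_\L(T)^{\op} = \End_{\L_0}(T)^{\op}$. Because $T$ is a tilting $\L_0$-module and $\gldim \L_0 < \infty$, standard tilting theory (Happel's derived equivalence, or a direct Ext-dimension estimate) gives $\gldim \G_0 < \infty$. Next, using the graded self-orthogonality of $T$, I would build a graded projective resolution $P^\bullet \twoheadrightarrow T$ in $\gr\L$ whose $n$th term is generated in internal degree $n$, by iteratively taking minimal approximations from $\add(\L\otimes_{\L_0} T\gsh n)$. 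The condition $\Ext^i_{\Gr\L}(T,T\gsh j) = 0$ for $i\neq j$ is exactly what forces such a linear (or ``$T$-linear'') resolution to exist and to have all syzygies supported in the correct degrees. Applying $\Hom_\L(-,T)$ to this resolution then identifies $\Ext_\L^n(T,T) = \Hom_\L(P^n, T)$ in the right internal degree, which is precisely the statement that $\G$, with its Ext-grading, is generated in degrees $0$ and $1$ with relations in degree $2$; combined with the already-established linear resolution, this yields the vanishing $\Ext^i_{\Gr\G}(DT, DT\gsh j) = 0$ for $i \ne j$ after verifying that $DT$ is a tilting $\G_0$-module (which follows from $\G_0$-duality and the fact that $T$ is tilting over $\L_0$).

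For part (b), I would exploit the symmetry introduced by $D$. The linear resolution of $T$ constructed above, together with the duality $D$, produces a ``Koszul complex'' $K^\bullet$ which is a graded $\L$-$\G$-bimodule complex with $K^n = \L \otimes_{\L_0} (T \otimes_{\G_0} D\G\text{-piece})\gsh n$ or the appropriate shape; the two natural truncations of $K^\bullet$ recover resolutions of $T$ as an $\L$-module and of $DT$ as a $\G$-module respectively. Computing $\Ext^*_\G(DT,DT)$ via the second truncation and matching terms degree-by-degree produces a morphism of graded algebras $\L \to [\Ext^*_\G(DT,DT)]^{\op}$; the vanishing conditions in part (a) (applied on the $\G$ side) show that this morphism is bijective in every bidegree.

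The main obstacle I expect is the construction and bimodule-bookkeeping of the Koszul-type complex $K^\bullet$: verifying that the linear resolution of $T$ can be chosen $\G$-equivariantly, so that after dualizing it also serves as a $\G$-projective resolution of $DT$, is the technical heart of the argument. Once that bimodule complex is in place, both parts (a) and (b) follow in parallel, and the symmetry $\L \leftrightarrow \G$, $T \leftrightarrow DT$ becomes manifest; but setting this up rigorously in the generality of $\L_0$ only semiperfect with finite global dimension (rather than semisimple as in the classical case) is where the delicate homological algebra lies.
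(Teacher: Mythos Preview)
The paper does not contain a proof of this theorem at all: it is simply quoted as \cite[Theorem 4.2.1]{Adv} and used as a black box, so there is nothing in the present paper to compare your proposal against. Your sketch is in the right spirit for how generalized Koszul duality is established in \cite{TKos,Adv}---one does build a bigraded bimodule (what the paper calls $X$, giving rise to the functor $G_T=\mathbb R\Hom_{\Gr\L}(X,-)$ mentioned right after Theorem~\ref{kosdua}) and then checks that it induces the desired equivalence and biduality---but verifying the details in the stated generality is exactly the content of the cited paper, not of this one.

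One caution about your outline: the step ``$\G$ is generated in degrees $0$ and $1$ with relations in degree $2$, combined with the linear resolution, yields $\Ext^i_{\Gr\G}(DT,DT\gsh j)=0$ for $i\neq j$'' is too quick. In the $T$-Koszul setting $\G_0$ is not semisimple, so the classical ``quadratic plus linear resolution implies Koszul'' slogan does not apply directly; the self-orthogonality of $DT$ over $\G$ really has to come from the bimodule complex (your $K^\bullet$) functioning simultaneously as a $T$-linear $\L$-resolution of $T$ and, after dualizing, as a $DT$-linear $\G$-resolution of $DT$. That is indeed the technical heart, as you correctly identify, and it is carried out in \cite{Adv} rather than here.
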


We call the pair $(\G,DT)$ the \emph{Koszul dual} of $(\L,T)$.

Since $T$ is graded self-orthogonal, we can construct a certain bigraded bimodule $X$ and a functor between unbounded derived categories of graded modules $G_T=\mathbb R \Hom_{\Gr \L}(X,-) \colon \mathcal D (\Gr \L) \to \mathcal D(\Gr \G)$, as explained in \cite[Section 3]{Adv}. When $\L$ is $T$-Koszul, the functor $G_T$ restricts to an equivalence in different ways and we mention one version here. Let $\mathcal F_{\gr \L}(T)$ be the full subcategory of $\gr \L$ consisting of modules $M$ having a finite filtration $0=M_0 \subseteq M_1 \subseteq \ldots \subseteq M_t=M$ where the factors $M_i/M_{i-1}$ are graded shifts of direct summands of $T$ for all $1 \leq i \leq t$. Let $\mathcal L^b(\G)$ denote the category of bounded linear cochain complexes of graded projective $\G$-modules and cochain maps.

\begin{thm}\cite[Theorem 4.3.2]{Adv}\label{delta}
The functor $G_T \colon \mathcal D (\Gr \L) \to \mathcal D(\Gr \G)$ restricts to an equivalence $G_T \colon \mathcal F_{\gr \L}(T) \rightarrow \mathcal L^b(\G)$.
\end{thm}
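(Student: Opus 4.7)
The strategy is to verify the equivalence on the indecomposable building blocks of $\mathcal F_{\gr \L}(T)$ and then propagate through filtrations on both sides. Recall that $G_T = \mathbb R \Hom_{\Gr \L}(X,-)$ where $X$ is the bigraded $\G$-$\L$-bimodule constructed in \cite{Adv}, so that $\mathbb R \Hom_{\Gr \L}(X,-)$ recovers $\Ext^{\ast}_{\L}(T,-)$ together with its $\G$-action coming from the Yoneda product.

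First, I would evaluate $G_T$ on shifted summands $T_j\gsh n$. The self-orthogonality hypothesis $\Ext^i_{\Gr \L}(T,T\gsh j)=0$ for $i \neq j$ forces $\mathbb R \Hom_{\Gr \L}(X, T_j \gsh n)$ to be concentrated in one bidegree, where it coincides with the indecomposable projective $\G$-module corresponding to $T_j$ with the appropriate internal shift, viewed as a linear complex of length one. Thus the basic building blocks of $\mathcal F_{\gr \L}(T)$ map to the basic building blocks of $\mathcal L^b(\G)$. Next, for a short exact sequence $0 \to M' \to M \to M'' \to 0$ with $M',M'' \in \mathcal F_{\gr \L}(T)$, I would argue that applying $G_T$ yields a triangle in $\mathcal D(\Gr \G)$ whose connecting cohomology is killed by self-orthogonality, so it arises from a degreewise split short exact sequence of linear complexes of graded projectives. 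Induction on filtration length then gives $G_T(\mathcal F_{\gr \L}(T)) \subseteq \mathcal L^b(\G)$.

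For fully faithfulness I would begin with $\add$ of shifts of summands of $T$, where the identification $\G \cong [\Ext^{\ast}_{\L}(T,T)]^{\op}$ makes the claim essentially tautological. A five-lemma argument along filtrations on the source and the brutal truncations on the target extends this to all of $\mathcal F_{\gr \L}(T)$. For essential surjectivity, I would induct on the length of a bounded linear complex $C \in \mathcal L^b(\G)$: the base case consists of shifted projective indecomposables, handled by the first step; the inductive step expresses $C$ as an extension of strictly shorter linear complexes, lifts the outer terms using the induction hypothesis, and lifts the extension class using full faithfulness.

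The main obstacle is the last step, namely matching up extensions of linear complexes on the $\G$-side with extensions of $T$-filtered modules on the $\L$-side. This is where the symmetric statement of Theorem \ref{kosdua} is essential: it supplies a Koszul dual bimodule over $\G$, giving an inverse functor $F_T$ that realizes the lift canonically, so that one does not have to choose extensions ad hoc. All other steps reduce to bookkeeping once the correct vanishing of higher $\Ext$s and the adjunction between $G_T$ and $F_T$ are in place.
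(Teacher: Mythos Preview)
The paper does not contain a proof of this statement: Theorem~\ref{delta} is quoted from \cite[Theorem 4.3.2]{Adv} and used as a black box, so there is nothing here to compare your proposal against.

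Your outline is nonetheless a plausible reconstruction of how such an argument proceeds. One imprecision worth flagging: $G_T(T_j\gsh n)$ is not concentrated in a single \emph{bidegree}. Self-orthogonality forces its cohomology to vanish outside a single \emph{cohomological} degree, and what lives there is the full indecomposable graded projective $\G$-module (with all its internal graded pieces), generated in the correct internal degree---that is precisely what makes it a linear complex of length one. Your inductive scheme for landing in $\mathcal L^b(\G)$, the five-lemma argument for full faithfulness, and the induction on complex length for essential surjectivity are all standard moves. The only point requiring care is your invocation of Theorem~\ref{kosdua} to furnish an inverse functor $F_T$ for the surjectivity step: depending on the logical order in \cite{Adv}, this may be putting the cart before the horse, and in any case the lift of the extension class can be obtained directly from the full faithfulness already established, without appealing to an a priori inverse.
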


With some finiteness conditions, we get an equivalence of bounded derived categories of finitely generated graded modules.

\begin{thm}\cite[Theorem 4.3.4]{Adv}\label{derequiv}
Suppose $\L$ is artinian and $\G$ is noetherian. Assume $\gldim \G < \infty$. Then there is an equivalence of triangulated categories $G_T^b \colon \mathcal D^b(\gr \L) \rightarrow \mathcal D^b(\gr \G)$.
\end{thm}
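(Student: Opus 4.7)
The plan is to bootstrap from the equivalence $G_T \colon \mathcal F_{\gr \L}(T) \to \mathcal L^b(\G)$ supplied by Theorem~\ref{delta}, and promote it through a standard thick-subcategory generation argument. First I would show that $G_T$ is well-defined as a functor $\mathcal D^b(\gr \L) \to \mathcal D^b(\gr \G)$. For this it suffices to check that for each finitely generated graded $\L$-module $M$, the complex $\mathbb R\Hom_{\Gr \L}(X,M)$ has only finitely many nonzero cohomologies and each is finitely generated over $\G$. Since $\L$ is artinian, one can build a (possibly unbounded) resolution of $M$ by finite direct sums of graded shifts of summands of $T$ using that $T$ is a tilting $\L_0$-module and $\gldim \L_0 < \infty$; on this resolution $G_T$ computes the cohomology levelwise via Theorem~\ref{delta}, and the hypothesis $\gldim \G < \infty$ together with noetherian-ness of $\G$ forces the outcome to lie in $\mathcal D^b(\gr \G)$.

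Next I would establish full faithfulness. Theorem~\ref{delta} already provides the Hom-isomorphism $\Hom_{\mathcal D^b(\gr \L)}(M,N[i]) \xrightarrow{\sim} \Hom_{\mathcal D^b(\gr \G)}(G_T M, G_T N[i])$ whenever $M,N \in \mathcal F_{\gr \L}(T)$. The general case is then reached by a two-variable dévissage: brutally truncating a bounded complex $M^\bullet$ produces distinguished triangles whose outer terms are shifts of finitely generated modules, and each such module fits into a triangle with a term from $\add(T\gsh j)$ by the first paragraph. Applying the five-lemma across these triangles extends the isomorphism from the generators $T\gsh j$ to arbitrary objects of $\mathcal D^b(\gr \L)$ in both variables.

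For essential surjectivity I would argue that $\mathcal L^b(\G)$ already generates $\mathcal D^b(\gr \G)$ as a thick subcategory: every finitely generated graded projective $\G$-module is a direct sum of shifts of summands of $\G$, hence appears as a term of a linear complex, and, because $\G$ is noetherian with $\gldim \G < \infty$, every object of $\mathcal D^b(\gr \G)$ is quasi-isomorphic to a bounded complex of finitely generated projectives, which can be broken up into linear pieces by shifting and taking cones. Combining this with full faithfulness and the image description from Theorem~\ref{delta} yields that $G_T^b$ is essentially surjective.

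The main obstacle, and the place where all three hypotheses are really used, is the first step: controlling both boundedness of the cohomology and finite generation of each cohomology module of $G_T^b(M)$ for an arbitrary $M \in \gr \L$. The $T$-resolution on the $\L$-side is not itself finite, so boundedness must be imported from the $\G$-side via $\gldim \G < \infty$; meanwhile finite generation relies delicately on noetherian-ness of $\G$ and artinian-ness of $\L$ to guarantee that the degreewise-finite pieces assemble into finitely generated $\G$-modules after totalization. Once this finiteness package is in place, the triangulated dévissage is routine.
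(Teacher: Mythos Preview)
The paper does not prove this theorem. It is quoted verbatim from \cite[Theorem 4.3.4]{Adv} and no argument is supplied here; the result is simply invoked as background for the applications in Section~4. There is therefore no ``paper's own proof'' to compare your proposal against.

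That said, your outline is a reasonable sketch of how such a derived equivalence is typically established: start from the equivalence on the generating subcategory (Theorem~\ref{delta}), extend full faithfulness by d\'evissage along distinguished triangles, and obtain essential surjectivity by showing that $\mathcal L^b(\G)$ thickly generates $\mathcal D^b(\gr \G)$ under the hypotheses $\G$ noetherian and $\gldim \G < \infty$. The one place where your sketch is genuinely incomplete is the first step. You propose to resolve an arbitrary $M \in \gr \L$ by an unbounded complex of objects in $\add(T\gsh j)$ and then argue that $G_T$ of this complex lands in $\mathcal D^b(\gr \G)$ ``because $\gldim \G < \infty$''. But finite global dimension of $\G$ bounds projective resolutions of $\G$-modules; it does not by itself bound the cohomology of an arbitrary totalization coming from an infinite $T$-resolution on the $\L$-side. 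What is actually needed is a finiteness statement on the $\L$-side (for instance, that the artinian hypothesis forces $T$ to have finite projective dimension, or that $M$ admits a bounded $T$-coresolution), from which boundedness of $G_T(M)$ follows directly. Without such an input your argument for well-definedness has a gap. If you intend to submit this as a self-contained proof rather than a citation, that step needs to be made precise.
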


We can try to fit a standard Koszul algebra into the $T$-Koszul framework by setting $T=\Delta$. For this to work the algebra has to be regraded. It turns out that many standard Koszul algebras after an appropriate regrading are Koszul with respect to $\Delta$. We begin with an example.

\begin{example}\label{cato}
Let $\L$ be the path algebra  $\L=\Bbbk Q/I$, where $Q$ is the quiver
$$\xymatrix{1 \ar@/^/[r]^\alpha & 2 \ar@/^/[l]^{\alpha^\circ} \ar@/^/[r]^\beta & 3 \ar@/^/[l]^{\beta^\circ}}$$
and $I=\langle \rho \rangle$ is the ideal generated by the set of relations $$\rho= \{ \alpha \alpha^\circ - \beta^\circ \beta, \beta \beta^\circ\}.$$ This is a quasi-hereditary algebra with duality. If $\Bbbk=\mathbb C$, the field of complex numbers, then $\L$ corresponds to a non-trivial singular block of the BGG category $\mathcal O$ for the semi-simple Lie algebra ${\mathsf {sl}}(3,\mathbb C)$ \cite[5.2]{Quiv}. (\cite{Hum} is a textbook reference for category $\mathcal O$.) The indecomposable projective modules are
$$\begin{array}{ccccccccccc}
P_1 \colon & \xymatrix@!=2pt{\mathtt{S_1} \ar@{-}[dr] &&\\
& \mathtt{S_2} \ar@{-}[dl] \ar@{-}[dr] & \\ \mathtt{S_1} \ar@{-}[dr] && \mathtt{S_3}, \ar@{-}[dl] \\ & \mathtt{S_2} \ar@{-}[dl] &\\ \mathtt{S_1}} && P_2 \colon & \xymatrix@!=2pt{& \mathtt{S_2} \ar@{-}[dl] \ar@{-}[dr] &
\\\mathtt{S_1} \ar@{-}[dr]&& \mathtt{S_3}, \ar@{-}[dl]\\& \mathtt{S_2} \ar@{-}[dl] &\\ \mathtt{S_1}} && P_3 \colon & \xymatrix@!=2pt{&& \mathtt{S_3}
\ar@{-}[dl]\\& \mathtt{S_2} \ar@{-}[dl] &\\ \mathtt{S_1}}. \end{array}$$
The standard modules are $\Delta_1=S_1$,
$$\begin{array}{ccccccccccc}
\Delta_2 \colon & \xymatrix@!=2pt{& \mathtt{S_2} \ar@{-}[dl]\\
\mathtt{S_1}}, \end{array}$$ and $\Delta_3=P_3$.
The characteristic tilting module $\mathbb T=T_1 \oplus T_2 \oplus T_3$ has direct summands $T_1=S_1$,
$$\begin{array}{ccccccccccc}
T_2 \colon & \xymatrix@!=2pt{\mathtt{S_1} \ar@{-}[dr] &\\ & \mathtt{S_2 } ,
\ar@{-}[dl]\\\mathtt{S_1}}
\end{array}$$ and $T_3=P_3$.

The algebra is $T$-Koszul in three important ways using three different sets of orthogonal modules: namely, (i) the indecomposable summands of the characteristic tilting module, (ii) the simple modules, and (iii) the standard modules. The grading imposed on $\L$ is different in each case.

(i) If all arrows are assigned degree $0$, then $\L=\L_0$ and $\L$ is Koszul with respect to $\mathbb T$. The Koszul dual algebra of $(\L,\mathbb T)$ is the \emph{Ringel dual} quasi-hereditary algebra $[\End_\L(\mathbb T)]^{\op}$. In this example $\L$ is \emph{Ringel self-dual}, that is, there is an isomorphism $[\End_\L(\mathbb T)]^{\op} \cong \L$.

(ii) If all arrows are assigned degree $1$, then $\L_0 \cong \Bbbk^{\times r}$ and $\L$ is a Koszul algebra in the classical sense. It is even a standard Koszul algebra since the standard modules have linear projective resolutions
$$0 \to P_2 \gsh 1 \to P_1 \to \Delta_1 \to 0,$$
$$0 \to P_3 \gsh 1 \to P_2 \to \Delta_2 \to 0,$$
$$0 \to P_3 \to \Delta_3 \to 0.$$
The Koszul dual algebra $[\Ext_\L^{\ast} (\L_0,\L_0)]^{\op}$ is isomorphic to $\Bbbk Q/I'$, where $I'=\langle \rho ' \rangle$ is the ideal generated by the set of relations $$\rho '= \{\beta \alpha, \alpha \alpha^\circ - \beta^\circ \beta, \beta \beta^\circ, \alpha^\circ \beta^\circ \}.$$ If $\Bbbk=\mathbb C$, then the Koszul duality here is an instance of the parabolic-singular duality from \cite{Bei}, and the algebra $\Bbbk Q/I'$ must correspond to a block of a certain parabolic subcategory $\mathcal O^{\mathfrak p}$.

(iii) Finally, we want to show that $\L$ is Koszul with respect to $\Delta$. For this we need to do a little trick with the grading. Let $\deg \alpha = \deg \beta = 1$ and $\deg \alpha^\circ = \deg \beta^\circ = 0$. (See \cite{Flo} for a very similar choice of grading in a parabolic setting.) Then $\L$ is Koszul with respect to $\L_0 \simeq \Delta=\Delta_1 \oplus \Delta_2 \oplus \Delta_3$ since $\Ext^i_{\Gr \L}(\Delta,\Delta \gsh j) \neq 0$ implies $i=j$. The Koszul dual algebra $\G=[\Ext_\L^{\ast} (\Delta,\Delta)]^{\op}$ is isomorphic to the graded algebra $\Bbbk \check Q/\check I$, where $\check Q$ is the quiver $$\xymatrix{1 & 2 \ar@/_/[l]_{\check \alpha} \ar@/^/[l]^{\alpha^\circ} & 3 \ar@/_/[l]_{\check \beta} \ar@/^/[l]^{\beta^\circ}},$$ the grading is given by $\deg \check \alpha = \deg \check \beta = 1$ and $\deg \alpha^\circ = \deg \beta^\circ = 0$, and $\check I=\langle \check \rho \rangle$ is the ideal generated by the set of relations $$\check \rho= \{\check \alpha \check \beta, \alpha^\circ \check \beta - \check \alpha \beta^\circ \}.$$ We observe that $\G$ is a directed algebra. From Theorem \ref{kosdua} we know there is an isomorphism of graded algebras $$\L \cong [\Ext_\G^{\ast} (D \Delta,D\Delta)]^{\op}.$$ According to Theorem \ref{derequiv} there is an equivalence of triangulated categories $$\mathcal D^b(\gr \L) \to \mathcal D^b(\gr \G).$$ It restricts to the equivalence $$\mathcal F_{\gr \L}(\Delta) \to \mathcal L^b(\G)$$ from Theorem \ref{delta}.
\end{example}

\section{Height functions}

With the appropriate grading, the algebra in Example \ref{cato} was shown to be Koszul with respect to $\Delta$. It would be a mistake to assume that all standard Koszul algebras are Koszul with respect to $\Delta$ in a similar fashion. A sufficient condition can be given in terms of \emph{height functions}. A height function is a function $h\colon \{1,\ldots, r\} \to \mathbb N_{\geq 0}$.

Let $\Lambda=\bigoplus_{i=0}^t \L_i$ be a standard Koszul algebra. Fix a height function $h$. We consider the following condition on $h$.
\begin{equation}
[(\Delta_j)_l \colon S_i]=0 \text{ whenever } h(i) \neq h(j)-l. \tag{H}
\end{equation}
Since $\Delta_j$ is generated in a single degree, the graded parts of $\Delta_j$ coincide with the radical layers. Condition (H)
says that the height function contains the information about which radical layer each composition factor of $\Delta_j$ belongs to. In Example \ref{cato}, the height function $h(i)=i$ for all $1 \leq i \leq 3$ satisfies condition (H).

In \cite{Stan}, the standard modules are said to be \emph{directed} if condition (H) is satisfied for some height function $h$.

The existence of a height function satisfying (H) is more important than its actual values. The following lemma explains how values are related within the same block of the algebra.

\begin{lem}\label{h1}
If $h$ satisfies condition {\rm{(H)}}, then $e_i \L_1 e_j \neq 0$ implies $|h(i) - h(j)|=1$. More precisely, suppose {\rm{(H)}} holds and $e_i \L_1 e_j \neq 0$. Then
\begin{itemize}
\item[(a)] $h(i) - h(j)=1$ if and only if $j<i$.
\item[(b)] $h(j) - h(i)=1$ if and only if $i<j$.
\end{itemize}
\end{lem}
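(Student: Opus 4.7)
The plan is to translate the nonvanishing of $e_i \L_1 e_j$ into a statement about the first radical layer of a standard module, and then to read off the height relation from {\rm{(H)}}. Since $\L_0$ is semisimple, $(P_j)_1 = \L_1 e_j = \bigoplus_l e_l \L_1 e_j$ as a left $\L_0$-module, so $e_i \L_1 e_j \neq 0$ is equivalent to $[(P_j)_1 : S_i] > 0$.

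The key step is to pass from $P_j$ to $\Delta_j$. Writing $\Delta_j = P_j / U_j$, where $U_j = \sum_{k > j} \L e_k \L e_j$ is generated by the images of all maps $P_k \to P_j$ with $k > j$, the generators of $U_j$ lie in strictly positive degree, so $(U_j)_1 = \bigoplus_{k > j} e_k \L_1 e_j$. Consequently, for $i \leq j$, $(e_i \Delta_j)_1 = e_i \L_1 e_j$, and $e_i \L_1 e_j \neq 0$ forces $[(\Delta_j)_1 : S_i] > 0$. Condition {\rm{(H)}} applied at $l = 1$ then yields $h(i) = h(j) - 1$, i.e., $h(j) - h(i) = 1$. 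The sub-case $i = j$ cannot occur here (since $h(i) = h(i) - 1$ is absurd), so as a byproduct one also learns that $e_i \L_1 e_i = 0$ for every $i$.

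The remaining case $j < i$ is reduced to the above via the duality $(-)^\circ$. Composing $(-)^\circ$ with the standard $\Bbbk$-duality $D$ yields a covariant equivalence $\mod \L \simeq \mod \L^{\op}$ fixing each simple, which in turn induces a graded algebra isomorphism $\L \cong \L^{\op}$ compatible with the primitive idempotents $e_1, \ldots, e_r$. In particular $\dim_\Bbbk e_i \L_1 e_j = \dim_\Bbbk e_j \L_1 e_i$, so $e_j \L_1 e_i \neq 0$ as well; applying the preceding paragraph to this nonzero space (with $i$ and $j$ exchanged, noting $j < i$) yields $h(i) - h(j) = 1$.

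Together the two cases give $|h(i) - h(j)| = 1$ and the ``if'' directions of (a) and (b). The ``only if'' directions follow by trichotomy: assuming $h(i) - h(j) = 1$, one has $i \neq j$, and $i < j$ would give $h(j) - h(i) = 1$ by the ``if'' direction of (b), a contradiction; hence $j < i$, and the argument for (b) is symmetric. The main technical obstacle throughout is the degree-$1$ computation of $U_j$, which relies crucially on the fact that the minimal generators of $U_j$ live in strictly positive degree.
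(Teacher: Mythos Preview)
Your proof is correct and follows essentially the same route as the paper: reduce $e_i\L_1 e_j\neq 0$ with $i<j$ to $[(\Delta_j)_1:S_i]\neq 0$, apply {\rm{(H)}}, and handle $i>j$ via the duality $(-)^\circ$. The only notable differences are that you spell out the degree-$1$ computation of $U_j$ (which the paper leaves implicit), and you exclude $i=j$ by the contradiction $h(i)=h(i)-1$ coming from {\rm{(H)}}, whereas the paper invokes instead that $\End_\L(\Delta_i)$ is a division ring; both arguments are valid.
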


\begin{proof}
Suppose (H) holds and $e_i \L_1 e_j \neq 0$. If $i<j$, then $[(\Delta_j)_1 \colon S_i] \neq 0$ and $h(i)=h(j)-1$. If $i>j$, by using the duality $(-)^{\circ}$ we get that $e_j \L_1 e_i \neq 0$, and by repeating the argument we get $h(j)=h(i)-1$. We cannot have $i=j$, since $\End_\L(\Delta_i)$ is a division ring.
\end{proof}

In order to obtain Koszulity with respect to $\Delta$, we follow the same strategy as in Example \ref{cato}, and regrade our algebra $\L$. Given a height function $h$ satisfying (H), we define what we call the \emph{$\Delta$-grading} on $\L$ in the following way. We assign degrees to the spaces $e_i \L_l e_j$, $l \geq 0$, $1 \leq i,j \leq r$ whenever $e_i \L_l e_j \neq 0$. The idempotents should be in degree $0$, so we put $\deg_{\Delta}(e_i \L_0 e_i)=0$ for all $1 \leq i \leq r$. Obviously, we have $e_i \L_0 e_j=0$ whenever $i \neq j$. Since $\L$ is standard Koszul, it is generated by $\L_1$ over $\L_0$. With Lemma \ref{h1} in mind, we define the new grading on generators by
$$\deg_{\Delta}(e_i \L_1 e_j)=\left \{
\begin{array}{c c}
1 & \text { if } h(i)-h(j)=1\\
0 & \text { if } h(j)-h(i)=1
\end{array}
\right .$$
for all $1 \leq i,j \leq r$.
This can most conveniently be expressed as $$\deg_{\Delta}(e_i \L_1 e_j)=\frac {1+h(i)-h(j)} 2.$$
Hence, for all $l \geq 0$, $1 \leq i,j \leq r$ such that $e_i \L_l e_j \neq 0$, we get the formula $$\deg_{\Delta}(e_i \L_l e_j)=\frac {l+h(i)-h(j)} 2 \geq 0.$$ In order to distinguish the $\Delta$-grading from the original grading we use the notation $\L_{[n]}$ for the degree $n$ part of $\L$ with the $\Delta$-grading. The dependence of the $\Delta$-grading on the height function is only apparent, as the next lemma shows.

\begin{lem}
As long as {\rm{(H)}} holds, the $\Delta$-grading does not depend on the particular choice of a height function $h$.
\end{lem}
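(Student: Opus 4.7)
The plan is to show that if $h, h'$ are two height functions satisfying (H), then $h(i) - h(j) = h'(i) - h'(j)$ whenever $e_i \L_l e_j \neq 0$. Because the formula $\deg_{\Delta}(e_i \L_l e_j) = (l + h(i) - h(j))/2$ involves $h$ only through this oriented difference, this equality immediately forces the two induced $\Delta$-gradings on $\L$ to coincide (on zero spaces there is nothing to check).

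The first step is to handle the case $l = 1$ directly from Lemma \ref{h1}. For any pair $i, j$ with $e_i \L_1 e_j \neq 0$, that lemma pins down $h(i) - h(j)$ purely in terms of the fixed total ordering of the simples: it equals $+1$ when $j < i$ and $-1$ when $i < j$, and the case $i = j$ is excluded. The identical conclusion applies to $h'$, so $h(i) - h(j) = h'(i) - h'(j)$ for every pair of vertices connected by an arrow in the graded quiver of $\L$.

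To propagate this to arbitrary degree $l$, I would exploit that $\L$, being standard Koszul, is in particular Koszul in the ordinary sense and hence generated in degree $1$ over $\L_0$. If $e_i \L_l e_j \neq 0$, there is some nonzero product $a_1 a_2 \cdots a_l$ with $a_k \in e_{i_{k-1}} \L_1 e_{i_k}$, $i_0 = i$, $i_l = j$; in particular each $e_{i_{k-1}} \L_1 e_{i_k}$ is nonzero. Applying the previous paragraph to each step and telescoping,
$$h(i) - h(j) = \sum_{k=1}^{l} \bigl(h(i_{k-1}) - h(i_k)\bigr) = \sum_{k=1}^{l} \bigl(h'(i_{k-1}) - h'(i_k)\bigr) = h'(i) - h'(j),$$
which is exactly the required identity.

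I do not expect a substantial obstacle: the heart of the argument is already contained in Lemma \ref{h1}, which says that the absolute values of a height function are irrelevant and only the signed differences along arrows matter. Once this is combined with the degree-$1$ generation coming from standard Koszulity, the invariance of $\deg_\Delta$ follows by a routine telescoping argument.
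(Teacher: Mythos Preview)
Your argument is correct and rests on the same key observation as the paper's proof, namely that Lemma \ref{h1} determines the signed differences $h(i)-h(j)$ along arrows independently of $h$. The organization differs slightly: the paper first reduces to the case where $\L$ is indecomposable and then uses connectedness of the quiver together with Lemma \ref{h1} to deduce the global statement that $h-h'$ is constant on $\{1,\dots,r\}$, from which the equality of gradings follows immediately. You instead work locally, showing directly that $h(i)-h(j)=h'(i)-h'(j)$ for each pair with $e_i\L_l e_j\neq 0$ by factoring through degree~$1$ and telescoping. Your route avoids the preliminary reduction to blocks, while the paper's route yields the marginally stronger fact that $h$ is determined up to an additive constant on each block; in substance the two proofs are equivalent.
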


\begin{proof}
For simplicity assume that $\L$ is indecomposable as an algebra. Let $h$ and $h'$ be two height functions satisfying (H), and let $d=h(1)-h'(1)$. By repeated use of Lemma \ref{h1}, we get that $d=h(i)-h'(i)$ for all $1\leq i \leq r$. Then $\frac {l+h(i)-h(j)} 2= \frac {l+h'(i)-h'(j)} 2$ for all $l \geq 0$, $1 \leq i,j \leq r$. The general case follows.
\end{proof}

The degree zero part of $\L$ under the $\Delta$-grading is described in the following proposition.

\begin{prop}\label{null}
If {\rm{(H)}} holds and $\L$ is given the $\Delta$-grading, then $\L_{[0]} \simeq \Delta$ as graded $\L$-modules.
\end{prop}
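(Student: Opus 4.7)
The plan is as follows. Since the $\Delta$-grading is non-negative, $\L_{[\geq 1]} := \bigoplus_{n\geq 1} \L_{[n]}$ is a graded two-sided ideal, so $\L_{[0]} \cong \L/\L_{[\geq 1]}$ carries a natural structure of left $\L$-module via the quotient map. Decomposing $\L_{[0]} = \bigoplus_{j=1}^r \L_{[0]} e_j$ and $\Delta = \bigoplus_{j=1}^r \Delta_j$, it suffices to construct, for each $j$, an $\L$-module isomorphism $\Delta_j \cong \L_{[0]} e_j$.

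The key ingredient is the vanishing $e_k \L e_j \subseteq \L_{[\geq 1]}$ whenever $k > j$. The only component of $e_k \L e_j$ that could meet $\L_{[0]}$ is the $\Delta$-degree-zero component $e_k \L_{h(j)-h(k)} e_j$ (which is zero when $h(j) < h(k)$). A nonzero element there would have to be a linear combination of paths of length $h(j)-h(k)$ from $j$ to $k$ whose constituent arrows all have $\Delta$-degree zero. By Lemma \ref{h1}, such arrows strictly decrease the vertex index, so the target of any such path would be strictly less than $j$, contradicting $k > j$. This vanishing is the substantive step of the proof; everything else is formal. Since $\L_{[\geq 1]}$ is an ideal, the vanishing implies $I_j e_j := \sum_{k>j} \L e_k \L e_j \subseteq \L_{[\geq 1]} e_j$, and so the projection $\L e_j \twoheadrightarrow \L_{[0]} e_j$ descends to a surjection $\pi : \Delta_j \twoheadrightarrow \L_{[0]} e_j$.

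To upgrade $\pi$ to an isomorphism I would compare $\Bbbk$-dimensions. Directly from the definition of the $\Delta$-grading, $\dim_\Bbbk \L_{[0]} e_j = \sum_i \dim_\Bbbk e_i \L_{h(j)-h(i)} e_j = \sum_i [(P_j)_{h(j)-h(i)} : S_i]$. Condition (H) places every composition factor $S_i$ of $\Delta_j$ in original-degree $h(j)-h(i)$, whence $\dim_\Bbbk \Delta_j = \sum_i [(\Delta_j)_{h(j)-h(i)} : S_i]$. Term-by-term $[(\Delta_j)_{h(j)-h(i)} : S_i] \leq [(P_j)_{h(j)-h(i)} : S_i]$ because $\Delta_j$ is a graded quotient of $P_j$, so $\dim_\Bbbk \Delta_j \leq \dim_\Bbbk \L_{[0]} e_j$; combined with the surjection $\pi$ this forces equality, and hence $\pi$ is a bijection. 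Finally, both $\Delta_j$ and $\L_{[0]} e_j$ are concentrated in $\Delta$-degree zero (for $\Delta_j$ this is exactly what (H) encodes: a composition factor $S_i$ sitting in original-degree $h(j)-h(i)$ has $\Delta$-degree zero), so $\pi$ is automatically an isomorphism of graded $\L$-modules with respect to the $\Delta$-grading.
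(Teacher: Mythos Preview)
Your proof is correct and follows essentially the same route as the paper: both arguments amount to identifying $K_j=\ker(P_j\twoheadrightarrow\Delta_j)$ with $\L_{[\geq 1]}e_j$, and your ``key vanishing'' via Lemma~\ref{h1} (arrows of $\Delta$-degree~$0$ strictly decrease the index) is exactly the paper's induction on $l$. The only cosmetic difference is that where the paper proves the reverse inclusion $\L_{[\geq 1]}e_j\subseteq K_j$ directly from (H), you obtain it via a dimension count---but your final paragraph (showing $\Delta_j$ is concentrated in $\Delta$-degree~$0$ using (H)) is in fact that same direct argument, so nothing is really gained or lost.
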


\begin{proof}
It is sufficient to prove that $\L_{[0]} e_j \simeq \Delta_j$ for any given $1 \leq j \leq r$. Both $\L_{[0]} e_j$ and $\Delta_j$ are quotients of $P_j=\L e_j$. Let $K_j$ denote the kernel of $P_j \twoheadrightarrow \Delta_j$. Consider an element $e_i \lambda_l e_j \in (\L_{[\geq 1]}) e_j$ and suppose $\lambda_l \in \L_l$. Then $h(i)>h(j)-l$, so $e_i \lambda_l e_j \in K_j$ by (H). So $\Delta_j$ is a quotient of $\L_{[0]} e_j$.

Let $e_i \lambda_l e_j \in e_i \L_l e_j$ with $l \geq 1$. If $l=1$ and $e_i \lambda_1 e_j \neq 0$, then $\deg_{\Delta}(e_i \lambda_1 e_j)=0$ implies $i<j$. Since the generators of $\L$ are in $\L_1$, by induction on $l \geq 1$ it follows that $\deg_{\Delta}(e_i \lambda_l e_j)=0$ implies $i<j$. The module $K_j$ is generated by elements of the form $e_i \lambda_l e_j$ with $\lambda_l \in \L_l$, $l \geq 1$ and $i>j$. Whenever $i>j$, we have $e_i \lambda_l e_j \in (\L_{[\geq 1]}) e_j$. Therefore $K_j=(\L_{[\geq 1]}) e_j$ and $\L_{[0]} e_j \simeq \Delta_j$.
\end{proof}

As a consequence we have established that $\Delta$ is a tilting $\L_{[0]}$-module, which is one of the conditions for $\L$ being Koszul with respect to $\Delta$. Our next task is to show that $\Delta$ is graded self-orthogonal.

\begin{prop}\label{kazh}
Let $\L$ be a standard Koszul algebra that admits a height function $h$ satisfying condition {\rm{(H)}}.
\begin{itemize}
\item[(a)] Consider $\L$ with the ordinary grading. If $\Ext^u_{\Gr \L}(\Delta_j,S_i \gsh v) \neq 0$, then $u=v=h(i)-h(j)$. Similarly, if $\Ext^u_{\Gr \L}(S_i \gsh {-v},\nabla_j) \neq 0$, then $u=v=h(i)-h(j)$.
\item[(b)] When $\L$ is regraded according to the $\Delta$-grading, each standard module $\Delta_j$ has a linear projective resolutions.
\item[(c)] When $\L$ is regraded according to the $\Delta$-grading, each costandard module $\nabla_j$ has an injective coresolution $$0 \to \nabla_j \to I^0 \to I^1 \to I^2 \to \ldots $$ with $I^p$ cogenerated in degree $0$ for all $p \geq 0$.
\end{itemize}
\end{prop}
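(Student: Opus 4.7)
The plan is to establish (a) by induction on $u$, then deduce (b) and (c) as consequences via the $\Delta$-grading formula and the contravariant duality $(-)^{\circ}$.

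For part (a), standard Koszulity of $\L$ forces $u=v$ (from the linear projective resolution of $\Delta_j$), so the substantive content is the identity $u = h(i)-h(j)$; this is where condition (H) must enter. The base case $u=0$ is direct: $\Hom_{\Gr\L}(\Delta_j,S_i\gsh{v})$ vanishes unless $v=0$ and $i=j$. For the inductive step with $u\geq 1$, I would apply $\Ext_{\Gr\L}(\Delta_j,-)$ to the short exact sequence $0\to S_i\to \nabla_i\to \nabla_i/S_i\to 0$ shifted by $\gsh{v}$. The vanishing $\Ext^{\geq 1}_{\Gr\L}(\Delta_j,\nabla_i\gsh{v})=0$ coming from the quasi-hereditary structure collapses the long exact sequence into an isomorphism $\Ext^u_{\Gr\L}(\Delta_j,S_i\gsh{v})\cong\Ext^{u-1}_{\Gr\L}(\Delta_j,(\nabla_i/S_i)\gsh{v})$ for $u\geq 2$, and a surjection onto $\Ext^1$ for $u=1$. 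Filtering $\nabla_i/S_i$ by composition factors, non-vanishing of the right-hand side forces some composition factor $S_k$ of $\nabla_i$ at graded degree $-s$ (with $s\geq 1$) to satisfy $\Ext^{u-1}_{\Gr\L}(\Delta_j,S_k\gsh{v-s})\neq 0$. The inductive hypothesis yields $u-1=v-s=h(k)-h(j)$, while condition (H) applied to $\Delta_i$ — transported to $\nabla_i$ via the identification $[(\nabla_i)_{-s}:S_k]=[(\Delta_i)_s:S_k]$ supplied by $(-)^{\circ}$ — forces $h(k)=h(i)-s$. Eliminating $h(k)$ and $s$ gives $v=h(i)-h(j)$, and linearity then completes the identity. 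The second assertion of (a) follows by applying $(-)^{\circ}$, which exchanges $(\Delta_j,S_i\gsh{v})$ with $(S_i\gsh{-v},\nabla_j)$ and preserves Ext-dimensions.

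For part (b), (a) pins down the shape of the minimal projective resolution $P^{\bullet}\to\Delta_j$ in the ordinary grading: $P^u=\bigoplus_i P_i\gsh{u}^{m_{u,i,j}}$ with $m_{u,i,j}\neq 0$ only when $h(i)=h(j)+u$. I would then show by induction on $u$ that, under the $\Delta$-grading, every generator of $P^u$ sits at $\Delta$-degree $u$. The base $u=0$ is clear. For the inductive step, a generator $g$ of a $P_i$-summand of $P^u$ maps under the differential to $\sum_k x_k g_k$, where each $x_k\in e_i\L_1 e_k$ is forced by matching ordinary degrees and each $g_k$ generates some $P_k$-summand of $P^{u-1}$, sitting at $\Delta$-degree $u-1$ by induction. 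Since $h(i)-h(k)=1$ by (a), the formula $\deg_{\Delta}(e_i\L_1 e_k)=(1+h(i)-h(k))/2=1$ places $x_k g_k$ at $\Delta$-degree $u$, and consistency of the graded differential then forces $g$ to the same $\Delta$-degree.

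Part (c) follows by applying the contravariant duality $(-)^{\circ}$ to the linear projective resolution produced in (b): since $\Delta_j^{\circ}=\nabla_j$ and $(-)^{\circ}$ sends projectives to injectives while reversing graded shifts, one obtains an injective coresolution of $\nabla_j$ whose $p$-th term $I^p=(P^p)^{\circ}$ is cogenerated in the $\Delta$-degree asserted by the claim. The main obstacle of the whole argument is the identity $u=h(i)-h(j)$ in part (a); the key insight is that condition (H) simultaneously controls the radical layers of $\Delta_i$ and, via $(-)^{\circ}$, the graded composition factors of $\nabla_i$, and the quasi-hereditary vanishing $\Ext^{\geq 1}(\Delta_j,\nabla_i)=0$ is exactly what propagates this control inductively through the entire Ext tower.
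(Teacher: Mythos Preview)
Your argument for (a) is correct and genuinely different from the paper's. The paper obtains $u=h(i)-h(j)$ by sandwiching: Lemma~\ref{h1} applied along the linear resolution gives $h(i)-h(j)\leq u$, while an external reference \cite[Lemma 3]{Far} gives $h(i)-h(j)\geq u$. Your inductive approach via the short exact sequence $0\to S_i\to\nabla_i\to\nabla_i/S_i\to 0$ and the quasi-hereditary vanishing $\Ext^{\geq 1}_\L(\Delta,\nabla)=0$ is self-contained and avoids the citation; it is a cleaner argument. Part (b) is essentially the paper's proof.

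Part (c), however, has a real gap. You claim it follows from (b) by applying $(-)^\circ$, but the duality $(-)^\circ$ is \emph{not} compatible with the $\Delta$-grading: the underlying anti-involution sends $e_i\L_l e_j$ to $e_j\L_l e_i$, and $\deg_\Delta(e_j\L_l e_i)=\tfrac{l+h(j)-h(i)}{2}$ generally differs from $\deg_\Delta(e_i\L_l e_j)=\tfrac{l+h(i)-h(j)}{2}$. So ``reversing graded shifts'' in the $\Delta$-grading is not what $(-)^\circ$ does; if it did, you would conclude that $I^p$ is cogenerated in $\Delta$-degree $-p$, not $0$. The very asymmetry between (b) (generators in degree $p$) and (c) (cogenerators in degree $0$) is a manifestation of this incompatibility. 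The paper instead argues directly: from the second half of (a) the coresolution is colinear in the ordinary grading, and the relevant elements $e_b\lambda_1 e_a$ with $h(a)-h(b)=1$ have $\deg_\Delta=\tfrac{1+h(b)-h(a)}{2}=0$, so each $I^p$ is cogenerated in the same $\Delta$-degree as $I^{p-1}$. You need this computation (or an equivalent one); duality alone does not deliver it.
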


\begin{proof}
(a) For the first part suppose $\Ext^u_{\Gr \L}(\Delta_j,S_i \gsh v) \neq 0$. Since $\L$ is standard Koszul, we have $u=v$. If $P_a \to P_b$ is a linear map between indecomposable projective $\L$-modules, then $|h(a) - h(b)|=1$ by Lemma \ref{h1}. Since $\Delta_j$ has a linear projective resolution $$\ldots \to P^u \to \ldots \to P^2 \to P^1 \to P^0  \to \Delta_j \to 0$$ with $P^0=P_j$, and the indecomposable projective module $P_i$ occurs as a direct summand of $P^u$, we must have $h(i)-h(j) \leq u$. On the other hand, according to \cite[Lemma 3]{Far} we have $h(i)-h(j) \geq u$. So $h(i)-h(j)=u$. The second part follows by applying the duality $(-)^\circ$.

(b) In the ordinary grading the graded projective resolution of a standard module $\Delta_j$ is linear, so at each step of the resolution the image of generators are linear combinations of elements of the form $e_a \lambda_1 e_b$ with $\lambda_1 \in \L_1$. From (a) it follows that these elements satisfy $h(a)-h(b)=1$. Computing the $\Delta$-degree of these elements we get $\deg_{\Delta}(e_a \L_1 e_b)=\frac {1+h(a)-h(b)} 2= \frac {1+1} 2=1$, so the projective resolution of $\Delta_j$ is also linear when $\L$ is regraded according to the $\Delta$-grading.

(c) From the second part of (a) it follows that the injective resolution of $\nabla_j$ is colinear in the ordinary grading. If $x=e_a x$ is an element in $I^{p-1}$ that maps to a non-zero element in the socle of $I^p$, comparing the heights of the respective socles we find there must be an element of the form $e_b \lambda_1 e_a \in \L_1$ such that $h(a)-h(b)=1$ and $e_b \lambda_1 e_a x$ is a non-zero element in the socle of $I^{p-1}$. Computing the $\Delta$-degree we get $\deg_{\Delta}(e_b \L_1 e_a)=\frac {1+h(b)-h(a)} 2= \frac {1-1} 2=0$. We conclude that for each indecomposable summand of $I^p$ there is an indecomposable summand of $I^{p-1}$ which is cogenerated in the same $\Delta$-degree. Since $I^0=I_j$ is cogenerated in $\Delta$-degree $0$, the statement follows.
\end{proof}

\section{Koszulity with respect to $\Delta$}

We are now ready to prove our main theorem.

\begin{thm}\label{hoved}
Let $\L$ be a standard Koszul algebra. Let $h$ be a height function satisfying condition {\rm{(H)}}. Regrade $\L$ according to the $\Delta$-grading. Then $\L$ is a Koszul algebra with respect to $\L_{[0]} \simeq \Delta$.
\end{thm}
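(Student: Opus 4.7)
The plan is to verify directly the two conditions in the definition of Koszulity with respect to $T=\Delta$, now using the $\Delta$-grading on $\L$. All the real work has been done in Propositions \ref{null} and \ref{kazh}, so what remains is essentially a degree count.

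First I would dispose of the tilting condition. By Proposition \ref{null}, $\L_{[0]}\simeq\Delta$ as left $\L$-modules, so by restriction $\Delta$ is literally the regular representation of $\L_{[0]}$, which is automatically a tilting $\L_{[0]}$-module. To see that $\gldim\L_{[0]}<\infty$, I would note that the same isomorphism identifies $\L_{[0]}$ with $\End_\L(\Delta)^{\op}$ as algebras. Since $\top\Delta_i=S_i$ is not a composition factor of $\Delta_j$ whenever $i>j$, we have $\Hom_\L(\Delta_i,\Delta_j)=0$ for $i>j$, so $\End_\L(\Delta)^{\op}$ is directed and hence of finite global dimension.

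The core of the argument is the graded self-orthogonality of $\Delta$, namely $\Ext^n_{\Gr\L}(\Delta_j,\Delta_k\gsh{m})=0$ whenever $n\neq m$. For this I would feed the linear projective resolution $\cdots\to P^1\to P^0\to\Delta_j\to 0$ provided by Proposition \ref{kazh}(b) into $\Hom_{\Gr\L}(-,\Delta_k\gsh{m})$. Each term $P^n$ is a direct sum of summands $P_a\gsh{n}$ generated in $\Delta$-degree $n$, while by Proposition \ref{null} the target $\Delta_k\gsh{m}$ is concentrated in $\Delta$-degree $m$. A graded $\L$-module map $P_a\gsh{n}\to\Delta_k\gsh{m}$ is determined by the image of the generator of $P_a\gsh{n}$, which must lie in the degree-$n$ part of the target; this part vanishes unless $n=m$. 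Thus the complex $\Hom_{\Gr\L}(P^\bullet,\Delta_k\gsh{m})$ is identically zero in cohomological degrees $n\neq m$, and the desired $\Ext$-vanishing follows by passing to cohomology.

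The genuine obstacles have been overcome upstream: one had to engineer the $\Delta$-grading (Proposition \ref{null}) and then verify that the projective resolutions of standard modules remain linear in it (Proposition \ref{kazh}(b)). Granted those inputs, the theorem reduces to this one-line hom-degree computation, with no further technical difficulty.
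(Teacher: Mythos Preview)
Your proof is correct and follows essentially the same route as the paper: both verify the tilting condition via Proposition \ref{null} and the graded self-orthogonality via the linear resolution from Proposition \ref{kazh}(b) together with the fact that $\Delta\gsh{m}$ is concentrated in a single $\Delta$-degree. The only minor difference is that the paper dispatches $\gldim\L_{[0]}<\infty$ with the one-line inequality $\gldim\L_{[0]}\leq\gldim\L$, whereas you argue that $\L_{[0]}\cong\End_\L(\Delta)^{\op}$ is directed; both are valid and the overall structure is the same.
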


\begin{proof}
We have $\gldim \L_{[0]} \leq \gldim \L < \infty$. The isomorphism $\L_{[0]} \simeq \Delta$ from Proposition \ref{null} shows that $\Delta$ is a tilting $\L_{[0]}$-module. Let $0 \to P^n \to \ldots \to P^2 \to P^1 \to P^0  \to \Delta \to 0$ be a minimal graded projective resolution of $\Delta$. According to Proposition \ref{kazh}(b), the projective module $P^i$ is generated in degree $i$. Since $\Delta \gsh j$ is concentrated in a degree $j$, we have $\Hom_{\Gr \L}(P^i,\Delta \gsh j)=0$ whenever $i \neq j$. It follows that $\Ext^i_{\Gr \L}(\Delta,\Delta \gsh j)=0$ whenever $i \neq j$.
\end{proof}

If $\L$ and $h$ are as in the above theorem, then in particular $\Delta$ is a graded self-orthogonal $\L$-module. As usual, let $\G=[\Ext_{\L}^\ast(\Delta,\Delta)]^{\op}$. We construct the functor $G_{\Delta}=\mathbb R \Hom_{\Gr \L}(X,-) \colon \mathcal D(\Gr \L) \to \mathcal D(\Gr \G)$ in the usual way mentioned after Theorem \ref{kosdua}. The indecomposable graded projective $\G$-modules generated in degree zero are of the form $G_{\Delta}(\Delta_i)$, $1 \leq i \leq r$.

\begin{prop}
Let $\L$ and $h$ be as in Theorem \ref{hoved}. For any $1 \leq i \leq r$,
$$G_{\Delta}(\nabla_i) \simeq {}_\G S_i,$$
where ${}_\G S_i$ is the simple top of the indecomposable graded projective $\G$-module $G_{\Delta}(\Delta_i)$.
\end{prop}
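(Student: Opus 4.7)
The plan is to use classical facts about quasi-hereditary algebras with duality, together with the $\Delta$-graded information from Proposition \ref{kazh}(c), to show that $G_{\Delta}(\nabla_i)$ is a simple $\G$-module concentrated in cohomological and internal degree $0$, supported at the vertex $i$, which is exactly ${}_\G S_i$.

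First I would compute the cohomology of $G_{\Delta}(\nabla_i)$. By construction of $G_{\Delta}=\mathbb R\Hom_{\Gr \L}(X,-)$ discussed after Theorem \ref{kosdua}, the cohomology group $H^n G_{\Delta}(\nabla_i)$ is governed by the graded $\Ext$-groups $\Ext^n_{\Gr \L}(\Delta,\nabla_i\gsh s)$ as $s$ varies, with the shift $s$ reading off the internal $\G$-grading. Two classical facts about quasi-hereditary algebras with duality, recalled in Section 1, immediately restrict the output: the vanishing $\Ext^n_{\L}(\Delta,\nabla_i)=0$ for $n\geq 1$ kills all higher cohomology, while $\Hom_{\L}(\Delta_j,\nabla_i)=0$ for $j\neq i$ together with $\Hom_{\L}(\Delta_i,\nabla_i)\cong \End(S_i)$ restricts the support at cohomological degree $0$ to the idempotent $e_i$, where it is one-dimensional over $\End(S_i)$.

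Next I would pin down the internal $\Delta$-grading. Choose the graded lift of $\nabla_i$ so that its socle $S_i$ lies in $\Delta$-degree $0$; this is consistent with Proposition \ref{kazh}(c), which guarantees that every injective in the coresolution of $\nabla_i$ is cogenerated in $\Delta$-degree $0$. By Proposition \ref{null}, $\Delta_i$ is concentrated in $\Delta$-degree $0$, and any nonzero map $\Delta_i\to \nabla_i$ must hit the socle $S_i$, so it is homogeneous of degree $0$. Hence $\Hom_{\Gr \L}(\Delta,\nabla_i\gsh s)=0$ for $s\neq 0$, which forces $H^0 G_{\Delta}(\nabla_i)$ to be concentrated in internal $\Delta$-degree $0$.

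Finally I would identify the result with the simple top of $G_{\Delta}(\Delta_i)$. As noted just before the statement, $G_{\Delta}(\Delta_i)$ is the indecomposable graded projective $\G$-module generated in degree $0$ at vertex $i$, so its simple top ${}_\G S_i$ is the unique graded simple $\G$-module supported on $e_i$ in internal degree $0$. The computation above shows $G_{\Delta}(\nabla_i)$ has exactly this description, giving the desired isomorphism. The main obstacle is the careful grading bookkeeping for the internal $\Delta$-grading: the vanishing of $\Hom_{\Gr \L}(\Delta,\nabla_i\gsh s)$ for $s\neq 0$ depends crucially on the fact, supplied by Proposition \ref{kazh}(c), that the injective coresolution of $\nabla_i$ is cogenerated in $\Delta$-degree $0$; without this, the internal degree of $H^0 G_{\Delta}(\nabla_i)$ could not be pinned down and the identification with a single-degree simple module would fail.
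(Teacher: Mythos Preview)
Your proposal is correct and follows essentially the same approach as the paper: both compute the cohomology of $G_\Delta(\nabla_i)$ via $\Ext^\ast_{\Gr\L}(\Delta,\nabla_i\gsh{j})$, invoke the quasi-hereditary vanishing $\Ext^n_\L(\Delta,\nabla_i)=0$ for $n>0$, and then pin down the internal degree of the remaining one-dimensional $\Hom$ piece. The only notable difference is in the final identification step. The paper uses the full faithfulness of $G_\Delta$ to show $\Hom_{\mathcal D\Gr\G}(G_\Delta(\Delta_s),G_\Delta(\nabla_i))\simeq\Hom_{\mathcal D\Gr\L}(\Delta_s,\nabla_i)=0$ for $s\neq i$, whereas you argue directly that a one-dimensional graded $\G$-module in degree $0$ supported at $e_i$ must be ${}_\G S_i$; both are valid. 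One small caution: the precise formula in \cite[Proposition 3.2.1(e)]{Adv} reads $(H^l G_\Delta(\nabla_i))_j\simeq\Ext^{l+j}_{\Gr\L}(\Delta,\nabla_i\gsh{j})$, so ``$\Ext^n$ vanishing kills higher cohomology'' is not quite literal---the case $l+j=0$, $l\neq 0$ still needs your grading argument---but since you supply that argument anyway, nothing is missing.
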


\begin{proof}
According to \cite[Proposition 3.2.1(e)]{Adv}, we have $$(H^l G_{\Delta}(\nabla_i))_j \simeq \Ext^{l+j}_{\Gr \L}(\Delta,\nabla_i \gsh j)=0$$ whenever $j \neq 0$ or $l \neq 0$. So
\begin{align*}
G_{\Delta}(\nabla_i) &\simeq (H^0 G_{\Delta}(\nabla_i))_0\\
&\simeq \Hom_{\Gr \L}(\Delta,\nabla_i)\\
&\simeq \Hom_{\Gr \L}(\Delta_i,\nabla_i),
\end{align*}
which is $1$-dimensional as a $\Bbbk$-vector space. If $i \neq s$, then $$\Hom_{\mathcal D\Gr \G}(G_{\Delta}(\Delta_s),G_{\Delta}(\nabla_i)) \simeq \Hom_{\mathcal D\Gr \L}(\Delta_s,\nabla_i)=0,$$ so we must have $G_{\Delta}(\nabla_i) \simeq \top G_{\Delta}(\Delta_i)$.
\end{proof}

When we know that $\L$ is Koszul with respect to $\Delta$, we can apply all the $T$-Koszul machinery mentioned earlier in this paper and get a surprisingly clean statement of the duality theory. According to Theorem \ref{kosdua}, the $\Delta$-grading and the $\Ext$-grading on $\L$ coincide, so the results can be stated without reference to the $\Delta$-grading.

\begin{cor}\label{gqh}
Let $\L$ be a standard Koszul algebra admitting a height function $h$ satisfying condition {\rm{(H)}}. Let $$\G=[\Ext_{\L}^{\ast} (\Delta,\Delta)]^{\op}.$$ Then $$\L \cong [\Ext_{\G}^{\ast} (D\Delta,D\Delta)]^{\op}$$ as ungraded algebras. Furthermore, when $\L$ and $\G$ are given the $\Ext$-grading, then there is an equivalence of triangulated categories $\mathcal D^b(\gr \L) \to \mathcal D^b(\gr \G)$ which restricts to an equivalence $\mathcal F_{\gr \L}(\Delta) \to \mathcal L^b(\G).$
\end{cor}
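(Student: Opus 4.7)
The entire corollary reduces to a formal application of the generalized Koszul duality theorems \ref{kosdua}, \ref{delta}, and \ref{derequiv} once Theorem \ref{hoved} is in hand. My plan is first to regrade $\L$ by the $\Delta$-grading, so that Theorem \ref{hoved} combined with Proposition \ref{null} gives that $\L$ is Koszul with respect to $T = \L_{[0]} \simeq \Delta$. The pair $(\L,\Delta)$ then fits exactly the setup of the $T$-Koszul machinery with $\G = [\Ext_\L^{\ast}(\Delta,\Delta)]^{\op}$.

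For the algebra isomorphism, I will invoke Theorem \ref{kosdua}(b) applied to $(\L,\Delta)$ with Koszul dual $(\G,D\Delta)$, which yields an isomorphism of graded algebras $\L \cong [\Ext_\G^{\ast}(D\Delta,D\Delta)]^{\op}$, the grading on each side being the natural $\Ext$-grading. Forgetting the grading produces the claimed ungraded isomorphism. Because Theorem \ref{kosdua} also identifies the $\Delta$-grading on $\L$ with the $\Ext$-grading (as observed in the paragraph preceding the corollary), no mention of the regrading used in Theorem \ref{hoved} is needed in the final statement.

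For the derived equivalence, I will apply Theorem \ref{derequiv} to the functor $G_\Delta$; this demands three finiteness conditions: $\L$ artinian, $\G$ noetherian, and $\gldim \G < \infty$. The first is automatic since $\L$ is finite-dimensional. For the second, $\gldim \L < \infty$ (inherent to quasi-hereditary algebras) and the finiteness of the indexing set force $\Ext^i_\L(\Delta_j,\Delta_k)=0$ beyond $i = \gldim \L$, each such space being finite-dimensional, so $\G$ is finite-dimensional and hence noetherian. For the third, I will use the directedness of $\G$ noted in the introduction via \cite[Theorem 1.8(b)]{Icra}: a finite-dimensional directed algebra has finite global dimension, bounded by the length of the longest oriented path in its $\Ext$-quiver. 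Theorem \ref{derequiv} then produces the triangulated equivalence $\mathcal D^b(\gr \L) \to \mathcal D^b(\gr \G)$, and the restriction to an equivalence $\mathcal F_{\gr \L}(\Delta) \to \mathcal L^b(\G)$ is an immediate consequence of Theorem \ref{delta}. The main obstacle is the verification of these finiteness conditions on $\G$, in particular $\gldim \G < \infty$; every other step is a direct citation of the results already established.
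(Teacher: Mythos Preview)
Your proposal is correct and follows essentially the same route as the paper's proof: invoke Theorem \ref{hoved} to place $\L$ in the $T$-Koszul framework with $T=\Delta$, verify the finiteness hypotheses of Theorem \ref{derequiv} via finite global dimension of $\L$ and directedness of $\G$, and then cite Theorems \ref{kosdua}, \ref{derequiv}, and \ref{delta}. The only additional remark in the paper is the observation that $\mathcal F_{\gr \L}(\Delta)$ sits inside $\mathcal D^b(\gr \L)$ because $\Delta \in \mathcal D^b(\gr \L)$, ensuring that the restriction statement is meaningful.
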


\begin{proof}
According to Theorem \ref{hoved}, the algebra $\L$ with the $\Delta$-grading is Koszul with respect to $\Delta$. Since $\L$ has finite global dimension, the algebra $\G=[\Ext_{\L}^{\ast} (\Delta,\Delta)]^{\op}$ is finite dimensional. Since $\G$ is directed, we must have $\gldim \G < \infty$. Since $\Delta \in \mathcal D^b(\gr \L)$, the category $\mathcal F_{\gr \L}(\Delta)$ is a subcategory of $\mathcal D^b(\gr \L)$. The rest follows from Theorems \ref{kosdua}, \ref{derequiv} and \ref{delta}.
\end{proof}

This corollary should be compared with Theorem 4.1 in \cite{Stan}. In our treatment Koszul duality is between two graded algebra and we avoid going to Koszul categories. It should be noted however that we assume the presence of a duality functor $(-)^\circ$, an assumption not made in \cite{Stan}. When $\L$ is a graded quasi-hereditary algebra with duality, then the conditions (I)--(IV) in \cite{Stan} imply that $\L$ is a standard Koszul algebra admitting a height function $h$ satisfying condition {\rm{(H)}} \cite[Propositions 3.6 and 3.7]{Stan}.

If $\L$ is a standard Koszul algebra admitting a height function $h$ satisfying condition {\rm{(H)}}, then Corollary \ref{gqh} combined with Theorem \ref{kosdua}(a) tell us that $\G=[\Ext_{\L}^{\ast} (\Delta,\Delta)]^{\op}$ is Koszul with respect to $_{\G}D\Delta$. A perhaps more interesting question considered in \cite{Maz} and \cite{Stan} is whether the extension algebra of $\Delta$ is Koszul in the classical sense. We prove that with our conditions this is indeed the case.

\begin{thm}\label{ord}
Let $\L$ be a standard Koszul algebra admitting a height function $h$ satisfying condition {\rm{(H)}}. Then $\G=[\Ext_{\L}^{\ast} (\Delta,\Delta)]^{\op}$ is a Koszul algebra in the classical sense.
\end{thm}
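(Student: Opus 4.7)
The plan is to reduce the classical Koszul condition $\Ext^i_{\Gr \G}(\G_0,\G_0 \gsh j)=0$ for $i\neq j$ first to the analogous vanishing between the simple graded $\G$-modules, and then to transfer that vanishing back to $\L$ via the derived equivalence of Corollary~\ref{gqh}.

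First I would note that $\G_0$, regarded as a graded $\G$-module, is concentrated in degree zero, so its graded composition factors are all of the form ${}_\G S_a$ (that is, simples in degree zero), and $\G_0\gsh j$ has composition factors ${}_\G S_a\gsh j$. A routine induction on composition length, using the long exact sequence of graded $\Ext$ (applied in the first variable to a composition series of $\G_0$, then in the second to one of $\G_0\gsh j$), reduces the statement to
\[
\Ext^i_{\Gr \G}({}_\G S_a,{}_\G S_b\gsh j)=0 \qquad \text{for } i\neq j,\ 1\leq a,b\leq r.
\]

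To establish this I would use the proposition immediately preceding the theorem, which identifies $G_\Delta(\nabla_i)\simeq {}_\G S_i$ in $\mathcal D^b(\gr \G)$. Because the derived equivalence of Corollary~\ref{gqh} commutes with the internal grading shift $\gsh j$ and with the cohomological shift, it gives
\[
\Ext^i_{\Gr \G}({}_\G S_a,{}_\G S_b\gsh j)\simeq \Ext^i_{\gr \L}(\nabla_a,\nabla_b\gsh j).
\]
The duality $(-)^\circ$, which swaps $\Delta_c$ with $\nabla_c$ and satisfies $(M\gsh k)^\circ \simeq M^\circ\gsh{-k}$, then converts this to $\Ext^i_{\gr \L}(\Delta_b,\Delta_a\gsh j)$. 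Since $\L$ equipped with the $\Delta$-grading is Koszul with respect to $\Delta$ by Theorem~\ref{hoved}, the latter group vanishes for $i\neq j$.

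The main point that requires care is the compatibility of the derived equivalence with the internal grading shift $\gsh j$, as a priori a Koszul-type equivalence could intertwine grading with cohomological shifts. This compatibility is built into the construction of $G_\Delta$ from the bigraded bimodule $X$ of \cite{Adv}, and once it is invoked the argument is a short assembly of Theorem~\ref{hoved}, Corollary~\ref{gqh} and the preceding proposition.
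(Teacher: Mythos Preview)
Your argument has a genuine gap at exactly the point you flagged and then dismissed. The functor $G_\Delta$ does \emph{not} commute with the internal shift. For $T$-Koszul duality as set up in \cite{Adv}, the equivalence intertwines the grading shift with the cohomological shift: under the equivalence of Corollary~\ref{gqh} one has
\[
\Hom_{\mathcal D\Gr \G}(S_i,S_j\gsh m[w])\ \simeq\ \Hom_{\mathcal D\Gr \L}(\nabla_i,\nabla_j\gsh{-m}[w-m]),
\]
so $S_j\gsh m$ corresponds to $\nabla_j\gsh{-m}[-m]$, not to $\nabla_j\gsh m$. With the correct formula your computation becomes $\Ext^i_{\Gr \G}(S_a,S_b\gsh j)\simeq \Ext^{\,i-j}_{\Gr \L}(\nabla_a,\nabla_b\gsh{-j})$, and this does not reduce to the self-orthogonality of $\Delta$ in the way you want. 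A second, related problem is that the duality $(-)^\circ$ is not compatible with the $\Delta$-grading: an arrow of $\Delta$-degree $1$ is sent to one of $\Delta$-degree $0$, so you cannot simply turn graded $\Ext$'s between $\nabla$'s into graded $\Ext$'s between $\Delta$'s in the $\Delta$-grading.

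There is also a preliminary issue you skipped: in the $\Ext$-grading on $\G$ used in Corollary~\ref{gqh}, the degree-zero part is $\G_0=[\Hom_\L(\Delta,\Delta)]^{\op}$, which is in general \emph{not} semisimple. To even formulate classical Koszulity one must first produce a grading on $\G$ with $\G_{\{0\}}\cong\Bbbk^{\times r}$; the paper does this via the height function, setting $\deg_H(\Ext^\ast_\L(\Delta_i,\Delta_j))=h(j)-h(i)$. With that grading in place, the paper translates the Koszul condition to $\Ext^w_\G(S_i,S_j)=0$ for $w\neq h(i)-h(j)$, applies the derived equivalence (with the correct shift intertwining) to land in $\Ext^{w-m}_{\Gr \L}(\nabla_i,\nabla_j\gsh{-m})$, and then uses Proposition~\ref{kazh} together with condition~(H$^\circ$) to pin down $w$. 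The height function enters essentially; it cannot be bypassed by a direct appeal to $\Delta$-self-orthogonality.
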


\begin{proof}
From condition {\rm{(H)}} it follows that $\Hom_\L(\Delta_i,\Delta_j)\neq 0$ with $i \neq j$ implies $h(i)< h(j)$. If $\Ext^u_\L(\Delta_i,\Delta_j)\neq 0$ for some $u>0$, then $\Hom_\L(P_a,\Delta_j)\neq 0$ for some indecomposable projective direct summand $P_a$ of $P^u$, where $P^u$ is the projective module at position $u$ in the projective resolution of $\Delta_i$. From Proposition \ref{kazh}(a) it follows that $h(i)<h(a)$. Condition {\rm{(H)}} implies $h(i)<h(a) \leq h(j)$. With these inequalities in mind we define a new grading $\deg_H$ on $\G$ by the formula $$\deg_H(\Ext^{\ast}_\L(\Delta_i,\Delta_j))=h(j)-h(i).$$
Let $\G_{\{i\}}$ denote the degree $i$ part of $\G$ with this new grading. It follows from the inequalities above that $\G=\bigoplus_{i \geq 0} \G_{\{i\}}$ and $$\G_{\{0\}}=\bigoplus_{i=1}^r[\End_\L(\Delta_i)]^{\op} \cong \Bbbk^{\times r}.$$ Hence if in the new grading $P_i \gsh n \to P_j$ is a map of indecomposable projective $\G$-modules, then $h(j)-h(i)=n$. The Koszulity condition for $\G$ can now be reformulated as $$\Ext^w_\G(S_i,S_j)=0 \text{ whenever } w \neq h(i)-h(j).$$

Assume $\Ext^w_\G(S_i,S_j) \neq 0$. Then there must exist $m \in \Z$ such that $\Ext^w_{\Gr \G}(S_i,S_j \gsh m) \neq 0$ in the $\Ext$-grading of Corollary \ref{gqh}. Applying the derived equivalence we get
\begin{align*}
0 \neq \Ext^w_{\Gr \G}(S_i,S_j \gsh m) &\simeq \Hom_{\mathcal D\Gr \G}(S_i,S_j \gsh m[w])\\ &\simeq \Hom_{\mathcal D\Gr \L}(\nabla_i,\nabla_j \gsh {-m}[w-m])\\ &\simeq \Ext^{w-m}_{\Gr \L}(\nabla_i \gsh m,\nabla_j)
\end{align*}

Let $v=w-m$. If $\Ext^v_{\Gr \L}(\nabla_i \gsh m,\nabla_j) \neq 0$, then $\Hom_{\Gr \L}(\nabla_i \gsh m,I_b) \neq 0$ for some indecomposable injective direct summand $I_b$ of $I^v$, where $I^v$ is the injective module at position $v$ in the injective coresolution of $\nabla_j$. According to Proposition \ref{kazh}(a)(c), the injective module $I^v$ and therefore $I_b$ are cogenerated in degree $0$ and $v=h(b)-h(j)$.

Choose a non-zero $f \in \Hom_{\Gr \L}(\nabla_i \gsh m,I_b)$, and choose an element $x=e_b x \in \nabla_i \gsh m$ such that $f(x)\neq 0$ is in the socle of $I_b$. There is an element $e_i \lambda_l e_b \in \L_{[m]}$ such that $e_i \lambda_m e_b x \neq 0$ is in the socle of $\nabla_i \gsh m$. Solving for $l$ in the $\Delta$-degree formula we get $e_i \lambda_m e_b \in \L_l$, where $l=2m-h(i)+h(b)$.

Applying the duality $(-)^\circ$ to condition {\rm{(H)}}, we get the dual condition
\begin{equation}
[(\nabla_i)_{-l} \colon S_j]=0 \text{ whenever } h(j) \neq h(i)-l. \tag{H$^\circ$}
\end{equation}
In our situation $(\nabla_i)_{-l}$ maps to the socle of $I_b$, so condition {\rm{(H$^\circ$)}} implies $l=h(i)-h(b)$.

We now have four equations:
\begin{align*}
v&=w-m,\\ v&=h(b)-h(j),\\ l&=2m-h(i)+h(b),\\ l&=h(i)-h(b).
\end{align*}
Combining the last two we get $m=l=h(i)-h(b)$. So $w=v+m=h(b)-h(j)+h(i)-h(b)=h(i)-h(j)$, and we have proven that $\G$ is Koszul.
\end{proof}

We now discuss examples where Theorem \ref{hoved} and Corollary \ref{gqh} can be applied. A quasi-hereditary algebra with duality is called \emph{multiplicity free} if $(P_i \colon \Delta_j)=[\Delta_j \colon S_i] \leq 1$ for all $1 \leq i,j \leq r$.

\begin{example}
In \cite{Stan} the authors considered algebras corresponding to blocks of the BGG category $\mathcal O$. The algebras arising in this way are known to be quasi-hereditary with duality. In this context the standard modules are the same as the so-called Verma modules. The height function can be defined via the Weyl group. If such an algebra is multiplicity free, the conditions (I)--(IV) in \cite{Stan} are satisfied and the algebra is therefore a standard Koszul algebra admitting a height function $h$ satisfying condition {\rm{(H)}}.

We already saw such an algebra in Example \ref{cato}. Another example can be found in \cite[3.1]{Coin}. The following algebra $\L=\mathbb C Q/I$ corresponds to the principal block of category $\mathcal O$ for the semi-simple Lie algebra ${\mathsf {so}}(4,\mathbb C)$. The quiver $Q$ is
$$\xymatrix@!=25pt{1 \ar@/^/[r]^\alpha \ar@/_/[d]_\beta & 2 \ar@/^/[l]^{\alpha^\circ} \ar@/^/[d]^\gamma\\ 3 \ar@/_/[r]_\delta \ar@/_/[u]_{\beta^\circ} & 4 \ar@/_/[l]_{\delta^\circ} \ar@/^/[u]^{\gamma^\circ}}$$ and $I=\langle \rho \rangle$ is the ideal generated by the set of relations $$\rho=\{\gamma \alpha - \delta \beta, \alpha \alpha^\circ, \beta \beta^\circ, \gamma \gamma^\circ, \delta \delta^\circ, \alpha \beta^\circ - \gamma^\circ \delta, \beta \alpha^\circ - \delta^\circ \gamma, \alpha^\circ \gamma^\circ - \beta^\circ \delta^\circ\}.$$ The standard modules are $\Delta_1=S_1$, $$\begin{array}{ccccccccccc}
\Delta_2 \colon & \xymatrix@!=2pt{& \mathtt{S_2}, \ar@{-}[dl] \\ \mathtt{S_1}} && \Delta_3 \colon & \xymatrix@!=2pt{\mathtt{S_3} \ar@{-}[dr] &
\\& \mathtt{S_1},} && \Delta_4 \colon & \xymatrix@!=2pt{& \mathtt{S_4} \ar@{-}[dl] \ar@{-}[dr] &
\\\mathtt{S_3} \ar@{-}[dr]&& \mathtt{S_2}. \ar@{-}[dl]\\& \mathtt{S_1}} \end{array}$$ The algebra $\L$ is Koszul relative to $\Delta$. The Koszul dual algebra $\G=[\Ext_\L^{\ast} (\Delta,\Delta)]^{\op}$ is isomorphic to the algebra $\mathbb C \check Q/\check I$, where $\check Q$ is the quiver $$\xymatrix@!=25pt{1 & 2 \ar@/^/[l]^{\alpha^\circ} \ar@/_/[l]_{\check \alpha}\\ 3 \ar@/_/[u]_{\beta^\circ} \ar@/^/[u]^{\check \beta} & 4, \ar@/_/[l]_{\delta^\circ} \ar@/^/[l]^{\check \delta} \ar@/^/[u]^{\gamma^\circ} \ar@/_/[u]_{\check \gamma}}$$ and $\check I=\langle \check \rho \rangle$ is the ideal generated by the set of relations $$\check \rho= \{\check \alpha \check \gamma - \check \beta \check \delta, \alpha^\circ \check \gamma - \check \beta \delta^\circ, \check \alpha \gamma^\circ - \beta^\circ \check \delta, \alpha^\circ \gamma^\circ - \beta^\circ \delta^\circ \}.$$

As usual there is an isomorphism $\L \cong [\Ext_{\G}^{\ast} (D\Delta,D\Delta)]^{\op}$. In this and other examples we have labeled the arrows such that when $\L$ and $\G$ are given the $\Ext$-grading, the arrows marked with ${}^\circ$ have degree $0$ while all other arrows have degree $1$. With this grading there is an equivalence $\mathcal D^b(\gr \L) \to \mathcal D^b(\gr \G)$ which restricts to an equivalence $\mathcal F_{\gr \L}(\Delta) \to \mathcal L^b(\G).$
\end{example}

\begin{example}
Starting with a multiplicity free algebra corresponding to a singular block of category $\mathcal O$, we can apply parabolic-singular duality and get a new algebra which according to \cite[Theorem 5.1]{Stan} is also multiplicity free and satisfies conditions (I)--(IV). It follows that the new algebra is also Koszul with respect to $\Delta$.

One such algebra is $\L'=\Bbbk Q/I'$ from Example \ref{cato}(ii). The standard modules for $\Bbbk Q/I'$ are $\Delta_1=S_1$,
$$\begin{array}{cccccc}
\Delta_2 \colon & \xymatrix@!=2pt{& \mathtt{S_2} \ar@{-}[dl]\\ \mathtt{S_1} &}, &&
\Delta_3 \colon  & \xymatrix@!=2pt{& \mathtt{S_3}  \ar@{-}[dl]\\ \mathtt{S_2} &} .
\end{array}$$
The Koszul dual algebra $\G'=[\Ext_{\L'}^{\ast} (\Delta,\Delta)]^{\op}$ is isomorphic to the algebra $\Bbbk \check Q/\check I$, where $\check Q$ is the quiver $$\xymatrix{1 & 2 \ar@/_/[l]_{\check \alpha} \ar@/^/[l]^{\alpha^\circ} & 3 \ar@/_/[l]_{\check \beta} \ar@/^/[l]^{\beta^\circ}},$$ and $\check I=\langle \check \rho \rangle$ is the ideal generated by the set of relations $$\check \rho= \{\alpha^\circ \check \beta - \check \alpha \beta^\circ, \alpha^\circ \beta^\circ \}.$$
\end{example}

If the radical layers of the standard modules are known, the question whether the algebra admits a height function satisfying condition (H) can usually be determined by inspection.

\begin{example}
In \cite{Dipl} and \cite{Par} extension algebras of standard modules are computed inside another parabolic category $\mathcal O^\mathfrak p$, namely the one arising from $\mathfrak g=\mathsf {gl}(m+n,\mathbb C)$ and $\mathfrak p$ the parabolic subalgebra with Levi component $\mathfrak l = \mathsf {gl}(m,\mathbb C) \oplus \mathsf {gl}(n,\mathbb C)$.

If $n=1$, then \cite[Theorem 5.1]{Dipl} the algebra corresponding to the principal block of $\mathcal O^\mathfrak p$ is isomorphic to the algebra $\mathcal A_{m+1}=\mathbb CQ/I$, where $Q$ is the quiver $$\xymatrix{1 \ar@/^/[r]^{\alpha_1} & 2 \ar@/^/[l]^{\alpha_1^\circ} \ar@/^/[r]^{\alpha_2} & 3 \ar@/^/[l]^{\alpha_2^\circ} \ar@/^/[r]^{\alpha_3} & \cdots \ar@/^/[l]^{\alpha_3^\circ} \ar@/^/[r]^{\alpha_m}& m+1 \ar@/^/[l]^{\alpha_m^\circ}}$$ and $I=\langle \rho \rangle$ is the ideal generated by the set of relations $$\rho= \{\alpha_m \alpha_m^\circ \} \cup \{\alpha_i \alpha_{i-1}, \alpha_{i-1} \alpha_{i-1}^{\circ} - \alpha_i^{\circ} \alpha_i, \alpha_{i-1}^{\circ} \alpha_i^{\circ} \}_{1\leq i\leq m}.$$
The height function $h(i)=i$ for all $1 \leq i \leq m+1$ satisfies condition (H), so $\mathcal A_{m+1}$ is Koszul with respect to $\Delta$. The $\Delta$-grading on $\mathcal A_{m+1}$ already made an appearance in the paper \cite{Flo}.

If $n=2$, then the algebra corresponding to the principal block of $\mathcal O^\mathfrak p$ is still multiplicity free standard Koszul, but from its quiver \cite[Theorem 5.11]{Dipl} we can see that it does not admit a height function satisfying condition (H). In more detail, assuming condition (H) the radical layers of standard modules \cite[Table 5.8]{Dipl} tell us that
\begin{align*}
h(m-1|m-3) &= h(m|m-2)+2\\
&= h(m|m-1)+3\\
&= h(m-1|m-3)+2,
\end{align*}
a contradiction. If an algebra $\L$ is Koszul with respect to $\Delta$, then the extension algebra $\G=[\Ext_\L^{\ast} (\Delta,\Delta)]^{\op}$ is formal since $\Delta$ is graded self-orthogonal. In \cite{Par} it is conjectured that the extension algebra in the $n=2$ case is not formal.
\end{example}

\begin{example}
Another source of quasi-hereditary algebras with duality are the blocks of the Schur algebra $S(n,r)$. In \cite[3.1]{Rar} some of the (Morita equivalence classes of) blocks of $S(2,r)$ are represented as quivers with relations. The composition structure of standard modules can also be found in that paper. Coincidentally, the algebras $\mathcal A_k$ from the previous example occur in this setting, the only difference being that now the characteristic of the field is positive. In fact the notation $\mathcal A_k$ stems from the Schur algebra literature \cite{Schur}.

The algebras $\mathcal A_k$ and the graded derived equivalences with extension algebras of $\Delta$ also appear in the paper \cite{Weyl}.

The other blocks of $S(2,r)$ are not Koszul with respect to $\Delta$. Leaving aside the question whether they are standard Koszul, we can easily prove they do not admit a height function satisfying condition (H). Assuming condition (H), with the notation from  \cite{Rar} we get from the radical layers of the standard module $\Delta_5$ that $$h(5)=h(4)+1.$$ From the radical layers of the standard module $\Delta_6$ we get $$h(4)=h(5)+1,$$ a contradiction.
\end{example}

The following example shows that (H) can hold even if $\L$ is not multiplicity free.

\begin{example}
Let $\L$ be the path algebra  $\L=\Bbbk Q/I$, where $Q$ is the quiver
$$\xymatrix{1 \ar@/^0.8pc/@<1ex>[r]_\beta \ar@/^0.8pc/@<2ex>[r]^\alpha&
2  \ar@/^0.8pc/@<1ex>[l]_{\beta^\circ} \ar@/^0.8pc/@<2ex>[l]^{\alpha^\circ}}$$
and $I=\langle \rho \rangle$ is the ideal generated by the set of relations $\rho=\{\alpha \alpha^\circ, \beta \alpha^\circ,\alpha \beta^\circ, \beta \beta^\circ \}$. This algebra is the \emph{dual extension} (see \cite{DExt} for the definition) of the path algebra of the Kronecker quiver
$$\xymatrix{1 \ar@<-0.5ex>[r]_\beta \ar@<0.5ex>[r]^\alpha&
2.}$$ Algebras that are dual extensions of path algebras with oriented quivers are quasi-hereditary \cite{DExt}.

The indecomposable projective modules over $\L$ are
$$\begin{array}{ccccccccccc}
P_1 \colon & \xymatrix@!=2pt{&& \mathtt{S_1} \ar@{-}[dl] \ar@{-}[drr] &&\\
& \mathtt{S_2} \ar@{-}[dl] \ar@{-}[dr] &&& \mathtt{S_2} \ar@{-}[dl] \ar@{-}[dr]\\ \mathtt{S_1} && \mathtt{S_1} & \mathtt{S_1} && \mathtt{S_1},} && P_2 \colon & \xymatrix@!=2pt{& \mathtt{S_2} \ar@{-}[dl] \ar@{-}[dr] &
\\\mathtt{S_1} && \mathtt{S_1}.}
\end{array}$$
The standard modules are
$$\begin{array}{ccccc}\Delta_1 \colon & \xymatrix{\mathtt{S_1}}, &&
\Delta_2 \colon  & \xymatrix@!=2pt{& \mathtt{S_2} \ar@{-}[dl] \ar@{-}[dr] &\\\mathtt{S_1} && \mathtt{S_1}.}\end{array}$$
The standard modules have graded projective resolutions $$0 \to P_2\gsh 1 \oplus P_2\gsh 1 \to P_1 \to \Delta_1 \to 0,$$ $$0 \to P_2 \to \Delta_2 \to 0,$$ so $\L$ is standard Koszul. Define a height function by $h(1)=1$ and $h(2)=2$. Then $h$ satisfies (H), and $\L$ is Koszul with respect to $\Delta$.
\end{example}

\end{document}